\setlist{itemsep=2pt}
  \def\arxiveprint{%
    \resolve@inner{\bib@arxiveprint}
  }
  \def\bib@arxiveprint#1{%
    \begingroup
        #1\relax
        \bib@resolve@xrefs
        \bib@field@patches
        \bib'setup
        \let\PrintPrimary\@empty
        {%
          \IfEmptyBibField{arxivid}{\url{https://arxiv.org/}}
          {%
            \href{https://arxiv.org/abs/\bib'arxivid}{\nolinkurl{arXiv:\bib'arxivid}}%
            \IfEmptyBibField{arxivclass}{}{~\nolinkurl{[\bib'arxivclass]}}% \nopunct
          }
        }\bib'transition
        \setbib@@
    \endgroup
  }
\theoremstyle{plain}
\newtheorem{thm}{Theorem}[section]
\newtheorem{cor}[thm]{Corollary}
\newtheorem{lem}[thm]{Lemma}
\newtheorem{prop}[thm]{Proposition}
\theoremstyle{definition}
\newtheorem*{defn}{Definition}
\theoremstyle{remark}
\newtheorem*{rmk}{Remark}
\newcommand{\rk}{\operatorname{rank}}
\newcommand{\fund}[1]{\pi_1(#1)}
\newcommand{\PSL}{\mathrm{PSL}}
\newcommand{\tr}{\operatorname{tr}}
\newcommand{\ZZ}{\mathbb{Z}}
\newcommand{\CC}{\mathbb{C}}
\newcommand{\HH}{\mathbb{H}}
\newcommand{\arccosh}{\operatorname{arccosh}}
\newcommand{\area}{\operatorname{area}}
\newcommand{\sys}{\operatorname{sys}}
\newcommand{\M}{\mathcal{M}}
\DeclarePairedDelimiter\abs{\lvert}{\rvert}%
\begin{document}

\title{3-manifolds of rank 3 have filling links}
\author{William Stagner}
\address{\newline
	Department of Mathematics\newline
	Rice University\newline
	Houston, TX 77005, USA}
\email{\href{mailto:william.stagner@rice.edu}{william.stagner@rice.edu}}

\begin{abstract}
	M. Freedman and V. Krushkal introduced the notion of a ``filling'' link in a 3-manifold: a link $L$ is filling in $M$ if for any spine $G$ of $M$ disjoint from $L$, $\pi_1(G)$ injects into $\pi_1(M \setminus L )$. Freedman and Krushkal show that there exist links in the $3$-torus $T^3$ that satisfy a weaker form of filling, but they leave open the question of whether $T^3$ contains an actual filling link. We answer this question affirmatively by proving in fact that every closed, orientable 3-manifold $M$ with $\rk(\pi_1(M)) = 3$ contains a filling link.
\end{abstract}

\maketitle

\section{Introduction}
To formulate M. Freedman and V. Krushkal's definition of a filling link in \cite{FK21}, we define a \emph{spine} of a 3-manifold $M$ to be a 1-complex $G$ in $M$ of least first betti number such that $\pi_1(G) \to \pi_1(M)$ is surjective. Now for a link $L$ in $M$ and a spine $G$ disjoint from $L$, we may ask if $\pi_1(G) \to \pi_1(M \setminus L)$ is injective. If this holds for \emph{every} spine $G$ disjoint from a fixed link $L$, then $L$ is called a \emph{filling link}.

For any given 3-manifold $M$, it is easy to produce 1-complexes that ``fill'' in this sense --- the core $C$ of a handlebody in any Heegaard splitting of $M$ is a filling 1-complex in the sense that for any spine $G$ of $M$ disjoint from $C$, $\pi_1(G)$ injects into $\pi_1(M\setminus C)$. However, the question becomes much more subtle if we begin to look for filling \emph{links} in $M$. Freedman and Krushkal's definition is motivated in part by several well-known results with the same ``filling'' flavor, such as Bing's characterization\footnote{A closed 3-manifold $M$ is diffeomorphic to $S^3$ if and only if every knot in $M$ contained in a 3-ball.} \cite{Bin58} of $S^3$ or Myers' ``disk busting curves'' theorem\footnote{Every compact 3-manifold $M$ contains a knot that intersects any essential disk or sphere in $M$.} \cite{Mye82}. These results exemplify the frequently encountered difficulty of ``upgrading'' essentially trivial statements about 1-complexes to much deeper versions about knots or links.

It is possible to prove the existence of filling links in certain cases. Recall the \emph{rank} of a group is the smallest cardinality of a generating set. We define the rank of a 3-manifold $M$ to be the rank of $\pi_1(M)$. Finding filling links is trivial for 3-manifolds of rank 1, and in the appendix to \cite{FK21}, Leininger and Reid prove the existence of filling links in 3-manifolds of rank 2. For higher rank, establishing the existence of filling links turns out to be not only difficult in general but quite subtle even for concrete examples. Evidence of this is shown in \cite{FK21}, which focuses on the 3-torus $T^3$. In \cite{FK21}, utilizing the augmentation ideal, links $L$ are constructed in $T^3$ that are ``nearly'' filling in the sense that $\pi_1(G) \to \pi_1(T^3 \setminus L)$ is injective modulo (arbitrarily large) terms in the lower central series. However, the question of whether $T^3$ contains an actual filling link is left open.

Our main theorem answers this question in the affirmative.

\begin{restatable}{thm}{mainthm}\label{thm:Main}
	Let $M$ be a closed, orientable 3-manifold of rank 3. Then $M$ contains a filling link.
\end{restatable}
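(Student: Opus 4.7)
The plan is to construct $L$ so that $N := M \setminus L$ is a hyperbolic $3$-manifold with strong geometric and algebraic control, and then to use the classification of subgroups of Kleinian groups to rule out failure of injectivity. By Myers' theorem $M$ contains a hyperbolic link, and by iteratively drilling additional components while appealing to Thurston's hyperbolic Dehn surgery theorem, I would arrange that $N$ is a complete hyperbolic $3$-manifold of finite volume satisfying: (a) $\sys(N) > K$ for a constant $K$ to be determined; (b) $\rk(\fund{N}) > 3$; and (c) no proper finite-index subgroup of $\fund{N}$ has rank $3$ and surjects onto $\fund{M}$ under the natural map $\fund{N} \twoheadrightarrow \fund{M}$. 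Establishing (c) is the main non-geometric technicality of this step; I would exploit the flexibility in the choice of drilling curves to obstruct low-rank covers via their associated covering transformations.

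Now let $G$ be any spine of $M$ disjoint from $L$, and set $H := \operatorname{image}\bigl(\fund{G} \to \fund{N}\bigr)$. Since $G$ is a spine, the composition $H \twoheadrightarrow \fund{M}$ is surjective onto a group of rank $3$, so $\rk(H) \geq 3$; meanwhile $H$ is a quotient of $F_3$, so $\rk(H) \leq 3$. Hence $\rk(H) = 3$. The filling property at $G$ amounts to $\fund{G} = F_3 \to H$ being an isomorphism; if it fails, then $H$ is $3$-generated of rank $3$ with a nontrivial relation, and in particular not free. By the tameness theorem of Agol and Calegari--Gabai, $H = \fund{N_H}$ for some tame hyperbolic $3$-manifold $N_H$ covering $N$. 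Combining this with the rank constraints, $H$ cannot be a closed surface group (the equation $2g = 3$ has no integer solution), cannot be free (contradicted by the relation), and cannot be a finite-index subgroup of $\fund{N}$ (excluded by property (c)).

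The remaining possibility --- that $N_H$ is a tame hyperbolic $3$-manifold of infinite volume whose fundamental group has rank $3$ and is not free --- is the principal obstacle. My plan is to rule this out via a pleated-surface argument: a nontrivial relation in $H$ yields an essential map of a closed surface of genus at most $3$ into $N_H$, and after homotopy it realizes as a pleated surface in $N_H$ via standard Thurston technology. Pulling the systole bound $\sys(N) > K$ back to $N_H$ forces the pleated surface to have large injectivity radius at every point, which combined with the Gauss--Bonnet area bound $\area \leq 2\pi \abs{\chi}$ yields a contradiction once $K$ is taken sufficiently large. The delicate case is when $N_H$ has a geometrically infinite end, where I anticipate invoking Canary's covering theorem or a shrinkwrapping argument in the spirit of Calegari--Gabai. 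Once this geometric step is settled, we conclude that $F_3 \cong H \hookrightarrow \fund{N}$ for every spine $G$ disjoint from $L$, and therefore $L$ is a filling link.
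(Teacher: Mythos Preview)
Your overall architecture---tameness dichotomy plus a systole bound to exclude low-genus surface subgroups---matches the paper's, but three of your load-bearing steps are not justified and, as written, do not go through.

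First, the construction of $L$. You assert that iterated drilling together with Thurston's Dehn surgery theorem produces $\sys(N)>K$ for arbitrary $K$; Dehn surgery concerns filling, not drilling, and there is no general mechanism by which drilling increases systole (new short geodesics can and do appear). The paper handles this by an entirely different device: it exhibits a specific universal link $L_0\subset S^3$, the principal congruence link for $\Gamma(3+2i)<\PSL_2(\ZZ[i])$, proves $\sys(S^3\setminus L_0)>2\arccosh(9/2)$ by an elementary trace estimate, and then for any $M$ takes $L=p^{-1}(L_0)$ under a branched cover $p\colon M\to S^3$. Since $M\setminus L$ finitely covers $S^3\setminus L_0$, the systole bound transfers. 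Your property~(c) is likewise never established; the paper sidesteps it completely by noting that $L_0$ has $42$ components, so $b_1(M\setminus L)\geq 4$, which already kills both the finite-index case and forces the virtual fiber (if any) to be a noncompact surface, hence $\Gamma$ free.

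Second, in your ``remaining case'' you claim that a nontrivial relation in the rank-$3$ group $H$ yields an essential closed surface of genus $\leq 3$ in $N_H$. That is not true without further argument: one must analyze the boundary $\partial\M_0(H)$ of the truncated Kleinian manifold, and the cases (all tori; incompressible genus $3$; compressible boundary) each require separate treatment---the paper uses half-lives-half-dies, a theorem of Stallings on free generation, and the Jaco--Shalen classification of two-generator subgroups, respectively, to reduce everything to the single obstruction of a genus-$2$ surface subgroup. Finally, your pleated-surface area argument ignores accidental parabolics: a closed incompressible surface can carry essential curves that are parabolic in $N$, in which case the surface runs into cusps and the injectivity-radius bound fails. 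The paper treats this case separately by compressing along an annulus to the cusp boundary, obtaining an essential once- or twice-punctured torus, and then invoking the Adams--Reid systole inequality. Without these ingredients your sketch does not close.
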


\begin{cor}
	The 3-torus $T^3$ contains a filling link.
\end{cor}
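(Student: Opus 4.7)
The plan is to derive the corollary as a direct application of Theorem~\ref{thm:Main}, which I may take as established. The only work is to verify that the 3-torus $T^3$ satisfies the hypotheses of the theorem, namely that $T^3$ is a closed, orientable 3-manifold of rank~3.

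First I would observe that $T^3 = S^1 \times S^1 \times S^1$ is manifestly closed and orientable, being a finite product of closed orientable 1-manifolds. Next I would verify that $\rk(\pi_1(T^3)) = 3$. Since $\pi_1(T^3) \cong \ZZ^3$, the standard basis exhibits a generating set of size $3$, giving $\rk(\pi_1(T^3)) \leq 3$. For the reverse inequality, I would use that any group generated by $k$ elements admits at most a $k$-generated image under any homomorphism; in particular, $\ZZ^3$ surjects onto $(\ZZ/2\ZZ)^3$, which is a 3-dimensional $\mathbb{F}_2$-vector space and so cannot be generated by fewer than $3$ elements. Hence $\rk(\ZZ^3) \geq 3$, and combining the two bounds gives $\rk(\pi_1(T^3)) = 3$.

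With the hypotheses verified, invoking Theorem~\ref{thm:Main} with $M = T^3$ immediately yields a filling link in $T^3$. There is no substantive obstacle in this reduction; all of the mathematical difficulty resides in the proof of the main theorem itself, which must construct an explicit link $L$ and verify the injectivity $\pi_1(G) \hookrightarrow \pi_1(T^3 \setminus L)$ for \emph{every} spine $G$ disjoint from $L$. The corollary is noteworthy not for its proof but for its content: it answers in the affirmative the central open question raised by Freedman and Krushkal in \cite{FK21}, upgrading the near-filling links constructed there via the augmentation ideal to genuine filling links.
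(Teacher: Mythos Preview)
Your proposal is correct and matches the paper's approach: the corollary is stated immediately after Theorem~\ref{thm:Main} with no separate proof, being an obvious consequence once one notes that $T^3$ is closed, orientable, and has $\pi_1(T^3)\cong\ZZ^3$ of rank~$3$. Your added verification that $\rk(\ZZ^3)=3$ is more detail than the paper gives, but entirely appropriate.
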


We prove Theorem~\ref{thm:Main} in three main steps. In Section 2, we show that if $L$ is a link in $M$ of rank 3 such that $M \setminus L$ admits a complete hyperbolic metric of finite volume, then either $L$ is a filling link or $\pi_1(M\setminus L)$ contains the fundamental group of a closed surface of genus 2. In Section 3, we show that if the length of the shortest closed geodesic in a finite volume hyperbolic 3-manifold $N$ is sufficiently large, then $\pi_1 (N)$ cannot contain such a subgroup. In Section 4, we construct in $M$ a hyperbolic link that achieves this lower bound on the length of the shortest closed geodesic. Our arguments utilize techniques from minimal surfaces in 3-manifolds, universal links, and Kleinian groups.

\subsection*{Acknowledgements} The author foremost wishes to thank his Ph.D. advisor, Alan Reid, for his many years of mentorship and support, and without whom this paper would not be possible. The author would also like to thank Charles Ouyang, Andrea Tamburelli, and Zeno Huang for several helpful conversations concerning minimal surfaces. Additionally, the author would like to thank Christopher Leininger for helpful comments on an early draft of this paper.

\section{Filling link obstructions}
\subsection{Notation and background}

We work in the upper half-space model of hyperbolic 3-space, $$\HH^3 = \{(z,t) : z\in \CC, t > 0\}, \quad ds^2 = \frac{\abs{dz}^2 + dt^2}{t^2}.$$ In this model, the \emph{boundary at infinity} of $\HH^3$ is identified with the Riemann sphere $\hat \CC = \CC \cup \{\infty\}$. An orientation-preserving isometry of $\HH^3$ extends uniquely to a conformal automorphism of $\hat \CC$, so we identify $\operatorname{Isom}^+(\HH^3) \cong \PSL_2(\CC)$.

A \emph{Kleinian group} $\Gamma$ is a discrete subgroup of $\PSL_2(\CC)$. We consider only torsion-free, non-abelian Kleinian groups, so $\HH^3/\Gamma$ is a complete hyperbolic 3-manifold. The \emph{domain of discontinuity} $\Omega(\Gamma)$ of $\Gamma$ is the largest open subset of $\hat \CC$ on which $\Gamma$ acts properly discontinuously. There is an associated \emph{Kleinian manifold}
$$
	\M(\Gamma) = (\HH^3 \cup \Omega(\Gamma))/\Gamma
$$
with boundary $\partial \M(\Gamma) = \Omega(\Gamma)/\Gamma$, called the \emph{conformal boundary} of $\Gamma$. Note that the complete hyperbolic manifold $ \HH^3 / \Gamma $ is diffeomorphic to the interior $\operatorname{int}(\M(\Gamma))$. We say that $\Gamma$ is \emph{geometrically finite} if the action of $\Gamma$ on $\HH^3$ has a finite-sided fundamental domain.

An isometry $\gamma\in \Gamma \subset \PSL_2(\CC)$ is \emph{parabolic} if $\gamma$ has precisely one fixed point on $\hat \CC$, called a cusp. Otherwise, $\gamma$ is \emph{loxodromic} (since $\Gamma$ is torsion-free). A loxodromic element $\gamma$ has a complex length $$\ell(\gamma) = \ell_0(\gamma) + \theta(\gamma),$$ where $\ell_0(\gamma) > 0$ is the length of the geodesic loop in the free homotopy class corresponding to $\gamma$ in $\HH/\Gamma$. The free homotopy class of a parabolic contains arbitrarily short loops. Note that $\gamma$ is parabolic if and only if $\abs{\tr \gamma}= 2$.

A \emph{hyperbolic link} is a link $L$ in a 3-manifold $M$ such that $M\setminus L$ admits a complete hyperbolic metric of finite volume.

The main result of this section is the following algebraic characterization of a hyperbolic link which is not filling.

\begin{prop}\label{prop:alternative}
	Let $M$ be a closed, orientable 3-manifold of rank $3$, and let $L\subset M$ be a hyperbolic link with at least 4 components. Then either $L$ is filling or $\pi_1(M\setminus L)$ contains a genus 2 surface subgroup.
\end{prop}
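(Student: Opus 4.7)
My plan is to prove the contrapositive: assume $L$ is not filling and construct a genus~$2$ surface subgroup of $\pi_1(M \setminus L)$. Non-filling gives a spine $G$ of $M$, disjoint from $L$, such that the inclusion induces a non-injective map $\pi_1(G) \to \pi_1(M \setminus L)$; the rank hypothesis forces $\pi_1(G) \cong F_3$. I thicken $G$ to a genus~$3$ handlebody $H \subset M \setminus L$ and set $X = M \setminus (L \cup \operatorname{int}(H))$, so that $M \setminus L = H \cup_{\partial H} X$, with $\partial H$ a closed surface of genus $3$ and the remaining boundary of $X$ consisting of the torus cusps of $L$.

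The first step is to apply the Papakyriakopoulos tower / loop theorem to $X$. The non-injectivity of $\pi_1(H) \to \pi_1(M \setminus L)$, combined with the irreducibility of the hyperbolic manifold $M \setminus L$, produces an embedded essential disk $(D, \partial D) \hookrightarrow (X, \partial H)$. If $\partial D$ is a meridian of $H$ (i.e., bounds a disk $D'$ in $H$), then $D \cup D'$ is an embedded $2$-sphere; a subsidiary analysis --- combining irreducibility with a handle-cancellation argument that would otherwise let me strictly reduce the rank of $G$ as a spine --- reduces to the case where $\partial D$ is essential in $H$ as well. Compressing $\partial H$ along $D$ then yields an embedded closed surface $\Sigma \subset M \setminus L$ with a genus~$2$ component: either a single genus~$2$ surface, if $\partial D$ is non-separating on $\partial H$, or a disjoint genus~$1$ and genus~$2$ pair, if $\partial D$ is separating.

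The second step, which I expect to be the main obstacle, is to promote the genus~$2$ component $\Sigma_0$ to a $\pi_1$-injective subsurface of $M \setminus L$. The plan is to iterate the compression procedure on $\Sigma_0$, tracking the decrease in Haken complexity, until reaching an incompressible surface. Irreducibility rules out collapse of $\Sigma_0$ to a sphere, and the hypotheses that $L$ has at least four components and $\rk(\pi_1(M)) = 3$ should rule out collapse to a torus: the torus boundary components of $X$ enrich the amalgamated-product structure of $\pi_1(M \setminus L)$ along $\partial H$ enough that the final incompressible surface must retain a genus~$2$ component. The most delicate point is precisely this genus accounting, because compression disks may interact simultaneously with $\partial H$ and with the torus cusps of $L$; a careful inductive argument on a composite complexity of the pair $(G, \Sigma)$ is what I expect to require.
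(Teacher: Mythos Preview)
Your plan differs substantially from the paper's argument, and the gap you yourself flag in Step~2 is real and, as stated, not closed.

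The paper does not work with embedded surfaces in $M\setminus L$ at all. Instead it studies the image $\Gamma = i_*(\pi_1 G)\subset \pi_1(M\setminus L)$ as a Kleinian group. Tameness plus Canary's covering theorem forces $\Gamma$ to be either a virtual surface fiber subgroup (in which case a Betti-number count and Hopficity give injectivity, contradicting non-filling) or geometrically finite. In the geometrically finite case the paper analyzes the boundary of the compact core $\M_0(\Gamma)$: the all-tori case is excluded by counting cusps against the $\geq 4$ components of $L$; the incompressible genus~$3$ case is excluded by Stallings' homology criterion forcing $\Gamma$ free; a compressible component gives $\Gamma\cong\Gamma'\ast\ZZ$ with $\Gamma'$ two-generated, and the Jaco--Shalen trichotomy for two-generator subgroups of a finite-volume hyperbolic group, together with the peripheral/Dehn-filling argument using the spine surjectivity onto $\pi_1(M)$, eliminates all but the free case (which again gives injectivity). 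The genus~$2$ surface subgroup arises only from the remaining case: an incompressible genus~$2$ component of $\partial\M_0(\Gamma)$, living in the cover, not as an embedded surface downstairs.

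Your Step~2 is precisely where the hypotheses $\rk(\pi_1 M)=3$ and $|L|\geq 4$ must enter, and your sketch does not use them in any concrete way. After one compression you have a genus~$2$ surface $\Sigma_0$, but nothing prevents $\Sigma_0$ from compressing further to boundary-parallel tori; ruling this out is exactly the content of the paper's Jaco--Shalen case analysis and the spine/peripheral argument. The difficulty is that once you compress $\partial H$, the resulting surface no longer bounds a handlebody carrying the spine, so the surjectivity of $\pi_1(G)\to\pi_1(M)$ --- the only leverage you have from $\rk(\pi_1 M)=3$ --- is decoupled from $\Sigma_0$. Your proposed ``composite complexity'' induction does not explain how to recover that leverage. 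There is also a smaller issue in Step~1: non-injectivity of $\pi_1(H)\to\pi_1(M\setminus L)$ does not immediately give an element of $\ker\bigl(\pi_1(\partial H)\to\pi_1(X)\bigr)$ outside the meridian subgroup; you need the normal-subgroup version of the Loop Theorem together with an argument (not supplied) that this kernel is not contained in $\ker\bigl(\pi_1(\partial H)\to\pi_1(H)\bigr)$.
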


\begin{rmk}
	As described in \cite{FK21}*{Appendix}, it is always possible to find a hyperbolic link in a compact, orientable 3-manifold with any prescribed number of components by appealing to \cite{Mye82}.
\end{rmk}
The proof of Proposition~\ref{prop:alternative} will comprise \S 2.2 and \S2.3. For the remainder of this section, we fix a hyperbolic link $L\subset M$ with at least 4 components and an embedding of a spine $i \colon G \hookrightarrow M \setminus L$. Denote the image of the induced map on fundamental groups by $\Gamma = i_*(\pi_1 G) \subset \pi_1(M\setminus L)$.

\subsection{Virtual fiber --- geometrically finite dichotomy}
Since $M\setminus L$ is hyperbolic, the Tameness Theorem of Agol~\cite{Ago04} and Calegari-Gabai~\cite{CG06} together with Canary's covering theorem~\cite{Can96} imply (see, e.g. \cite{AFW15}) that either:
\begin{enumerate}
	\item $\Gamma$ is a virtual surface fiber group, i.e. there exists a finite cover $\widetilde M \to M\setminus L$ and a surface bundle $\Sigma \hookrightarrow \widetilde M \twoheadrightarrow S^1$ with $\Gamma = \pi_1(\Sigma)$, or
	\item $\Gamma$ is geometrically finite.
\end{enumerate}
In the first case, since $\widetilde M$ is non-compact, the fiber surface $\Sigma$ is also non-compact. In particular, $\Gamma = \pi_1(\Sigma) \cong F(n)$ is free, so $\pi_1(\widetilde M)$ decomposes as $$\fund{\widetilde M} \cong F(n) \rtimes Z.$$ However, since $L$ has at least 4 components, we have $$b_1(\widetilde M)\geq b_1(M\setminus L) \geq 4,$$ so $n = 3$ and $\Gamma \cong F(3) \cong \pi_1(G)$.
It follows from the Hopfian property of free groups that $\fund{G} \to \fund{M\setminus L}$ is injective. Hence we see that if $L$ is not filling, then $\Gamma$ is geometrically finite.

\subsection{Geometrically finite case} Now suppose $\Gamma$ is geometrically finite.
If $\Gamma$ contains parabolics, denote by $\M_0(\Gamma)$ the manifold obtained by removing a maximal collection of disjoint cusp neighborhoods from $\M(\Gamma)$. If $\Gamma$ is parabolic-free, then we set $\M_0(\Gamma) = \M(\Gamma)$. It is a classical result of Marden~\cite[Proposition 4.2]{Mar74} that $\Gamma$ is geometrically finite if and only if $\M_0(\Gamma)$ is compact. When $\Gamma$ is understood, we write more simply $\M_0 = \M_0(\Gamma)$.

Since $b_1(\Gamma) \leq 3$, half-lives-half-dies (see e.g., \cite[Lemma 3.5]{Hat}) implies the total genus of $\partial \M_0$ is at most 3. We now analyze the possibilities for $\partial \M_0$.

\emph{$\partial \M_0$ is a union of incompressible tori.} In this case, $\operatorname{int}(\M_0) = \HH^3/\Gamma$ has finite volume. Hence the inclusion $\Gamma \subset \pi_1(M\setminus L)$ induces a covering $\HH^3/\Gamma \to M\setminus L$ which is finite-sheeted. But recall that $M\setminus L$ has at least 4 cusps, while $\HH^3/\Gamma$ has at most 3 cusps, so we reach a contradiction.

\emph{$\partial \M_0$ contains an incompressible component of genus 2.} We have immediately that $\Gamma$, and hence also $\pi_1(M\setminus L)$, contains a genus 2 surface subgroup, as desired.

\emph{$\partial \M_0$ is incompressible of genus $3$.} We first compute the betti numbers of $\M_0$. Since $\M_0$ is connected with non-empty boundary, we immediately have $b_0(\M_0) = 1$ and $b_3(\M_0) = 0$. To compute $b_1$, first note that $b_1(\M_0) \geq 3$ by half-lives-half-dies. On the other hand, $\Gamma \cong \pi_1(\M_0)$ is rank 3, so $b_1(\M_0) \leq 3$. Hence, $b_1(\M_0) = 3$.

Now recall that $$\chi(\M_0) = \frac{1}{2}\chi(\partial \M_0) = -2,$$ which follows from an elementary application of the Mayer-Vietoris sequence. From this we obtain $b_2(\M_0) = 0$.

Suppose now $\Gamma = \langle \gamma_1, \gamma_2, \gamma_3 \rangle$. Since $b_1(\Gamma) = 3$, the images of $\gamma_1, \gamma_2, \gamma_3$ in $H_1(\Gamma; \mathbb Q)$ are linearly independent over $\mathbb Q$. This together with $H_2(\Gamma; \mathbb Q) = 0$ allows us to apply \cite{Sta65}*{Theorem 7.4}, which states that $\gamma_1, \gamma_2, \gamma_3$ freely generate a subgroup of $\Gamma$, which of course is equal to $\Gamma$. But this yields a contradiction, since $\Gamma$ contains a non-free subgroup induced by $\partial \M_0$, which was assumed to be incompressible of genus 3.

\emph{$\partial \M_0$ contains a compressible component.} Let $D\subset \partial \M_0$ be a compressing disk and $Y$ the manifold obtained by cutting along $D$. If $D$ is non-separating, then $$\Gamma \cong \pi_1 (Y) * \ZZ$$ with $\rk(\pi_1 (Y))=2$. On the other hand, if $D$ is separating with components $Y'$, $Y''$, then by Grushko's theorem, and without loss of generality, we can assume that $\rk(\pi_1(Y')) = 2$, $\rk(\pi_1(Y'')) = 1$, so that $$\Gamma \cong \pi_1(Y') * \pi_1(Y'') \cong \pi_1(Y') * \ZZ.$$ In both cases, we see that $\Gamma$ decomposes as $\Gamma \cong \Gamma' * \ZZ$, for some two-generator subgroup $\Gamma'$.

Now by \cite{JS79}, we have the following possibilities for $\Gamma'$:
\begin{enumerate}
	\item $\Gamma'$ has finite index in $\pi_1(M\setminus L)$, or
	\item $\Gamma'$ is free of rank 2, or
	\item $\Gamma'$ is free-abelian of rank 2.
\end{enumerate}

Clearly case (1) is impossible since $b_1(\pi_1(M \setminus L)) \geq 4$. 

For case (2), if $\Gamma'$ is free of rank 2, then $\Gamma$ is free of rank 3, so $\pi_1(G) \to \pi_1(M\setminus L)$ is injective. 

Now we consider case (3). Let $\phi\colon \pi_1(M\setminus L) \to \pi_1(M)$ be the map on $\pi_1$ induced by trivially filling each component of $L$. Recall that $\ker(\phi)$ is normally generated by a collection of meridians $\{\mu_1,\dots,\mu_r\}$, where each $\mu_i$ is contained in a peripheral subgroup $P_i$ corresponding to a component of $L$. Since $M\setminus L$ is atoroidal, the rank 2 abelian subgroup $\Gamma' \cong \ZZ\times \ZZ$ is conjugate into some peripheral subgroup $P_j$ of $\pi_1(M\setminus L)$. After choosing a framing $P_j = \langle \mu_j, \lambda_j \rangle$, we see that $\rk (\phi(P_j)) = \rk(\langle \phi(\lambda_j)\rangle) \leq 1$. In particular, $\rk(\phi(\Gamma')) \leq 1$, so $\rk(\phi(\Gamma)) \leq 2$. But recall that $G$ is a spine of $M$, so the composition $$ \pi_1(G) \xrightarrow{i_*} \pi_1(M\setminus L) \xrightarrow{\phi} \pi_1(M)$$ is surjective by definition. In particular, $$\rk(\phi(i_*(G))) = \rk(\phi(\Gamma)) = 3,$$ which is a contradiction. Thus, we exclude the possibility of case (3). This completes the proof of Proposition~\ref{prop:alternative}.\qed

\section{Genus bounds of essential surfaces}\label{section:min-surfaces}

We now use tools from minimal surface theory to translate the algebraic obstruction of Proposition~\ref{prop:alternative} into a geometric one. First we recall some basic facts about minimal surfaces.

\subsection{Minimal surfaces} Recall that a \emph{minimal surface} in a 3-manifold is an immersed surface with mean curvature identically zero. Equivalently, a minimal surface is a critical point of the area functional. A surface which is a local minimum of the area functional is called \emph{stable}. We will use the following area estimate which follows from the monotonicity formula (see e.g. \cite{And82}).

\begin{lem}\label{monotonicity}
	Let $M$ be a hyperbolic 3-manifold and let $B(x,r)$ be the open ball of radius $r \leq \operatorname{injrad}_M(x)$ centered at $x$. If $S$ is a stable minimal surface passing through $x$, then
	\begin{equation}
		\area(S\cap B(x,r)) \geq 2\pi(\cosh r - 1).
	\end{equation}
\end{lem}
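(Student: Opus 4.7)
The plan is to reduce the statement to a purely hyperbolic-space assertion and then invoke the classical monotonicity formula for minimal surfaces. Since $r \leq \operatorname{injrad}_M(x)$, the exponential map at $x$ identifies $B(x,r) \subset M$ isometrically with a geodesic ball $B(\tilde x, r) \subset \HH^3$ about some lift $\tilde x$ of $x$. The component of the preimage of $S \cap B(x,r)$ passing through $\tilde x$ is a minimal surface $\tilde S \subset B(\tilde x, r)$ through $\tilde x$ with the same area as $S \cap B(x,r)$, so it suffices to prove the bound for $\tilde S$ in $\HH^3$.

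Next I would apply the monotonicity formula in $\HH^3$. Set $A(t) = \area(\tilde S \cap B(\tilde x, t))$ and define
$$\rho(t) = \frac{A(t)}{2\pi(\cosh t - 1)}, \qquad 0 < t \leq r.$$
The denominator is exactly the area $\int_0^t 2\pi \sinh s\,ds$ of a totally geodesic disk of radius $t$ in $\HH^3$, which is why it is the correct normalization. The key claim is that $\rho$ is non-decreasing. To derive this, one applies the first variation of area to the radial vector field $\nabla d$, where $d = \operatorname{dist}(\tilde x, \cdot)$. Minimality of $\tilde S$ kills the mean-curvature contribution, leaving a divergence identity; the ambient geometry of $\HH^3$ (in particular $\Delta d = 2\coth d$ and $\operatorname{Hess} d = \coth d\,(g - dr\otimes dr)$) then yields $\operatorname{div}_{\tilde S}(\nabla d) \geq \coth d$, and a short integration-by-parts converts this lower bound into monotonicity of $\rho$.

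Finally, as $t \to 0^+$, $\rho(t)$ converges to the $2$-dimensional density of $\tilde S$ at $\tilde x$. Because $\tilde x$ is a smooth point of the minimal immersion, this density is at least $1$ (it equals the multiplicity of the immersion, which is at least one since $\tilde S$ passes through $\tilde x$). Combined with monotonicity, $\rho(r) \geq 1$, which rearranges exactly to the claimed inequality $\area(S \cap B(x,r)) \geq 2\pi(\cosh r - 1)$.

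The main technical obstacle is the derivation of the monotonicity of $\rho$ in Step 2. Fortunately this is classical hyperbolic comparison geometry and is carried out in \cite{And82}, so I would cite that result directly rather than reproduce the computation. Note that stability of $\tilde S$ is not actually needed for this particular inequality, though including the hypothesis is harmless and natural in view of how the lemma will be used later.
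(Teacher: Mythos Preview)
Your proposal is correct and matches the paper's approach: the paper does not supply a proof of this lemma at all but simply states it as a consequence of the monotonicity formula and cites \cite{And82}, which is exactly what you do after providing a helpful sketch of the underlying argument. Your observation that stability is not actually needed here is also correct.
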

There is an extensive body of work constructing stable minimal surfaces which are area-minimizing in their homotopy classes, based on the seminal work of Schoen and Yau \cite{SY79}, Sacks and Uhlenbeck \cites{SU81,SU82}, and many others \cites{MY82, HS88}. We make use of the following result of Huang and Wang \cite[Theorem 1.1]{HW17}.
\begin{thm}\label{thm:least-area-exist}
	Let $S$ be a closed orientable surface of genus at least two, which is immersed in a cusped hyperbolic 3-manifold $M^3$. If $S$ is $\pi_1$-injective, then $S$  is homotopic to an immersed least-area minimal surface in $M$.
\end{thm}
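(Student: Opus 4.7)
The strategy is to extract a least-area minimal immersion as the limit of an area-minimizing sequence in the homotopy class of the given immersion $f\colon S\to M$. The central difficulty is that $M$ is non-compact: a priori such a sequence could drift into a cusp and lose area in the limit. I would attack the problem in three stages: force the sequence to live in a fixed compact subset of $M$, invoke the classical compactness theory for minimal surfaces on that subset, and verify that the limit sits in the correct homotopy class.

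For the first stage, fix disjoint standard horoball cusp neighborhoods $C_1,\dots,C_k$ of $M$ bounded by horospherical tori $T_i$, and set $M_0 = M\setminus \bigsqcup \operatorname{int}(C_i)$. Each $T_i$ is a mean-convex surface whose mean curvature vector points into $M_0$, so by the geometric maximum principle it serves as a one-sided barrier: a minimal surface touching $T_i$ tangentially from the cusp side must lie in $T_i$. Since $S$ has genus at least two, no $\pi_1$-injective image of $S$ can be contained in any $T_i$, because $\pi_1(T_i) \cong \ZZ^2$. A cut-and-replace procedure using the $T_i$ as barriers then modifies any area-minimizing sequence $\{f_n\}$ to one whose images all lie in $M_0$ without increasing area.

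For the second stage, on the compact manifold-with-boundary $M_0$ I would invoke the classical existence theory for area-minimizing branched immersions (Sacks--Uhlenbeck, Schoen--Yau, Meeks--Yau, Hass--Scott): a minimizing sequence of immersions in a fixed homotopy class subconverges smoothly, away from a finite bubbling set, to a branched minimal immersion. Bubbling is impossible here because any bubble would yield a non-trivial minimal $2$-sphere in the strictly negatively curved manifold $M$, which cannot exist. Collapse of an essential simple closed curve on $S$ would likewise contradict the $\pi_1$-injectivity of $f$, since the resulting map on a surface of smaller complexity would fail to carry the pinched loop. Standard removal-of-singularities converts the branched immersion into a genuine minimal immersion, and smooth convergence on the complement of the (empty) bubbling set guarantees the limit is homotopic to $f$.

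The main obstacle is the first stage. In the closed case, area bounds yield compactness essentially for free, but in a cusped manifold one must rule out a scenario in which a minimizing surface retreats indefinitely into a cusp while its area drops below the infimum over compact surfaces in the class. Closing this escape route is exactly what forces one to combine the horospherical maximum-principle barriers (the geometric input) with the fact that $\pi_1$-injectivity prevents a genus-$\ge 2$ surface from being absorbed into a rank-$2$ abelian peripheral subgroup (the topological input). Together these inputs confine the minimizing sequence to $M_0$, after which the standard minimal-surface machinery takes over.
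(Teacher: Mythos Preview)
The paper does not prove this theorem; it is quoted as Theorem~1.1 of Huang--Wang \cite{HW17} and used as a black box, so there is no proof in the paper to compare against. Your outline does follow the Huang--Wang strategy---confine a minimizing sequence to a compact core, then apply the Sacks--Uhlenbeck/Schoen--Yau machinery---and your second and third stages are essentially correct as written.

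The gap is in your first stage. The mean curvature vector of a horospherical torus $T_i$ points \emph{into the cusp}, not into $M_0$: horoballs are convex in $\HH^3$, and the mean curvature vector of the boundary of a convex body points inward. Thus each cusp neighbourhood is the mean-convex side, and the maximum principle traps minimal surfaces \emph{inside} the cusp rather than excluding them from it. In particular, at the deepest point of an excursion into a cusp the surface is tangent to a horospherical level set from the $M_0$-side, and there the comparison gives no contradiction whatsoever (a totally geodesic plane in $\HH^3$ already exhibits exactly this tangency). Consequently the $T_i$ do not serve as barriers confining the sequence to $M_0$ in the way you describe, and the ``cut-and-replace without increasing area'' step is not justified by the maximum principle alone. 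The substantive content of \cite{HW17} at this point is a quantitative replacement/area estimate that exploits the explicit warped-product metric on the cusp together with the $\pi_1$-injectivity hypothesis to control the intersection of the maps $f_n$ with deep horospherical cross-sections; that estimate, rather than a barrier argument, is what prevents the minimizing sequence from escaping, and it is precisely the piece your sketch is missing.
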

The next lemma, attributed to Uhlenbeck \cite[Lemma 6]{Has95}, gives a lower bound for the genus of a least-area surface in terms of its area.

\begin{lem}\label{lem:genus-area-bound}
	A closed, $\pi_1$-injective, least-area surface $S$ of genus $g$ immersed in a hyperbolic 3-manifold satisfies
	\begin{equation}
		g \geq \frac{\area(S)}{4\pi} + 1.
	\end{equation}
\end{lem}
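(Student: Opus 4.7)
The plan is to combine the Gauss equation for the immersed minimal surface $S$ with the Gauss--Bonnet theorem. Since $S$ is least-area, it is in particular minimal, so its mean curvature vanishes identically. The $\pi_1$-injectivity hypothesis is not needed for the inequality itself; it only ensures that $S$ has well-defined genus in the relevant sense and that Gauss--Bonnet applies to the abstract surface.

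First I would use that the ambient sectional curvature is identically $-1$. For an immersed surface $S$ in a hyperbolic 3-manifold, the Gauss equation at each point $p\in S$ reads
\begin{equation*}
K_S(p) = -1 + \det A_p,
\end{equation*}
where $A_p$ is the shape operator. For a minimal surface, $\operatorname{tr} A_p = 0$, so its eigenvalues are of the form $\pm \kappa(p)$, giving $\det A_p = -\kappa(p)^2 \leq 0$. Consequently,
\begin{equation*}
K_S(p) \;\leq\; -1 \qquad \text{for every } p \in S.
\end{equation*}

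Next I would integrate this pointwise bound over $S$ and apply Gauss--Bonnet. Since $S$ is a closed orientable surface of genus $g$, we have
\begin{equation*}
4\pi(1-g) \;=\; 2\pi \chi(S) \;=\; \int_S K_S\, dA \;\leq\; \int_S (-1)\, dA \;=\; -\area(S).
\end{equation*}
Rearranging yields $\area(S) \leq 4\pi(g-1)$, and hence $g \geq \area(S)/(4\pi) + 1$, as desired.

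There is no serious obstacle here: the only ingredients are the Gauss equation, the trace-free property of the shape operator of a minimal surface, and Gauss--Bonnet. The mildest subtlety is that $S$ is merely immersed rather than embedded, but this does not affect either the pointwise curvature computation or the Gauss--Bonnet integration, both of which take place on the abstract surface $S$ pulled back from its immersion.
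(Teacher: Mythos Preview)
Your proof is correct and is exactly the standard argument: the Gauss equation for a minimal surface in a hyperbolic $3$-manifold gives $K_S \leq -1$ pointwise, and integrating against Gauss--Bonnet yields $\area(S)\leq 4\pi(g-1)$. The paper does not supply its own proof of this lemma but simply attributes it to Uhlenbeck via \cite[Lemma~6]{Has95}, where precisely this computation appears; so your argument coincides with the one being cited.
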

Now we relate the area of minimal surfaces to the geometry of the ambient 3-manifold. Recall that an \emph{accidental parabolic} of an immersed closed surface $i\colon S \looparrowright M$ is a simple closed curve $\alpha \subset S$ such that $i_*(\alpha) \in \pi_1(M)$ is parabolic.

\begin{prop}\label{prop:area-systole-bound}
	Let $M$ be a complete hyperbolic 3-manifold and $S$ a closed, stable minimal surface immersed in $M$ with no accidental parabolics. Then the area of $S$ satisfies
	\begin{equation}
		\area(S) \geq 2\pi \left(\cosh\left(\frac{\sys(M)}{2}\right) - 1\right),
	\end{equation}
	where $\sys(M)$ denotes the length of the shortest closed geodesic in $M$.
\end{prop}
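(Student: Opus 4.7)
The plan is to apply the monotonicity estimate of Lemma~\ref{monotonicity} at a point $x \in S$ where the ball $B(x, \sys(M)/2)$ embeds isometrically, i.e., where $\operatorname{injrad}_M(x) \geq \sys(M)/2$. The work is all in producing such a point, and I would do it by a thick/thin dichotomy.

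First, I would locate where short loops can appear. If $\operatorname{injrad}_M(y) < \sys(M)/2$, then there is a homotopically nontrivial loop based at $y$ of length less than $\sys(M)$. Its free homotopy class cannot contain a closed geodesic (every such geodesic has length at least $\sys(M)$), so the class must represent a parabolic. Hence the set $\{y \in M : \operatorname{injrad}_M(y) < \sys(M)/2\}$ is contained in the union of Margulis cusp neighborhoods of $M$. If any point of $S$ lies outside this set, we are done.

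So I would argue by contradiction, assuming $S$ is entirely contained in the thin part. Without loss of generality $S$ is connected, so it sits inside a single cusp neighborhood $C$. Since $\pi_1(C)$ is generated by parabolic elements of $\pi_1(M)$, every simple closed curve $\alpha \subset S$ satisfies that $i_*(\alpha)$ is either trivial or parabolic in $\pi_1(M)$; the no-accidental-parabolic hypothesis excludes the parabolic option, so $i_*$ kills every simple closed curve on $S$. Invoking the standard fact that simple closed curves generate $\pi_1$ of a closed surface, this forces $i_* \colon \pi_1(S) \to \pi_1(M)$ to be trivial.

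The triviality of $i_*$ lets $i$ lift to a minimal immersion $\tilde i \colon S \to \HH^3$ of the closed surface $S$ into hyperbolic space. For any basepoint $p \in \HH^3$, the distance-squared function $f(y) = d(p, y)^2$ is smooth and strictly convex on $\HH^3$, so its restriction to any minimal surface has strictly positive Laplacian; yet $f|_S$ must attain a maximum on the compact surface $S$, where $\Delta_S f \leq 0$ --- a contradiction. This rules out the supposed containment of $S$ in the thin part and produces the desired $x$, after which Lemma~\ref{monotonicity} with $r = \sys(M)/2$ gives the stated area bound. The most delicate step, I expect, is the passage from no accidental parabolics (a statement about individual simple closed curves) to the full triviality of $i_*$, which rests on the generation of $\pi_1$ of a closed surface by simple closed curves.
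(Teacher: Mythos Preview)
Your approach diverges from the paper's, and there is a gap at the thick/thin step. You correctly observe that at any point with $\operatorname{injrad}_M(y)<\sys(M)/2$ the short loop through $y$ must represent a parabolic, but the inference ``hence $\{y:\operatorname{injrad}_M(y)<\sys(M)/2\}$ is contained in the union of Margulis cusp neighborhoods'' does not follow. The Margulis lemma only guarantees a product structure (disjoint embedded horoball quotients) on the $\epsilon$-thin part for $\epsilon$ below the universal Margulis constant; once $\sys(M)/2$ exceeds that constant---precisely the regime used later, where $\sys(M)>4.3$---the horoball regions $\{y:d(\tilde y,\gamma\tilde y)<\sys(M)\}$ attached to the various parabolic fixed points need be neither embedded nor pairwise disjoint in $M$. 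The low-injectivity-radius set can then be a single connected region meeting several cusps, and a loop in it passing from one horoball region to another will typically represent a loxodromic element. So from ``$S$ is connected and contained in the low-injrad set'' you cannot deduce ``$S$ lies in a single cusp neighborhood $C$ with $\pi_1(C)$ generated by parabolics,'' and the chain of implications leading to the triviality of $i_*$ breaks down. The underlying difficulty is that the short loops you detect at points of $S$ are loops in $M$, not loops \emph{on} $S$, so the no-accidental-parabolic hypothesis cannot be applied to them.

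The paper sidesteps all of this by never looking for a point of large injectivity radius in $M$. It lifts $S$ to a minimal surface $\tilde S\subset\HH^3$, where Lemma~\ref{monotonicity} applies at any point with any radius, and takes the component $\tilde S_0$ of $\tilde S\cap B(x,\sys(M)/2)$ through an arbitrary $x\in\tilde S$. The work is to show that the covering projection is injective on $\tilde S_0$: if two points of $\tilde S_0$ differ by a deck transformation $\gamma$, the path in $\tilde S_0$ joining them descends to an essential loop \emph{on $S$} whose class in $\pi_1(M)$ is $\gamma$ and which is freely homotopic in $M$ to an arc of length less than $\sys(M)$, forcing $\gamma$ to be parabolic and contradicting the hypothesis. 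This manufactures the short loop on $S$ itself rather than merely in $M$ near $S$, which is exactly the mechanism your route lacks.
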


\begin{proof}
	Let $p\colon \HH^3 \to M$ be a covering map, and let $\tilde{S}$ be a lift of $S$ to $\HH^3$. Take any point $x\in \tilde{S}$ and let $\tilde{S}_0$ be a connected component of $S \cap B(x, \sys(M)/2)$ containing $x$. We first claim that $\left.p\right|_{\tilde{S}_0}$ is injective. To see this, suppose $x_0, x_1 \in \tilde{S}_0$ are distinct with $p(x_0) = p(x_1)$, and let $\gamma\in \operatorname{Isom}^+(\HH^3)$ be the covering transformation with $\gamma(x_0) = x_1$. Since $\tilde{S}_0$ is connected, let $\alpha\colon [0,1] \to  \tilde{S}_0$ be a path connecting $x_0$ to $x_1$. Then the loop $p\circ \alpha$ lies in the free homotopy class $\left[\gamma\right]\in \pi_1(M)$. In particular, $p\circ \alpha$ is essential in $M$, and by incompressibility, also essential in $S$. Now homotope $\alpha$ rel endpoints to the unique geodesic arc $\alpha'$ connecting $x_0$ and $x_1$. Composing with $p$ yields a homotopy in $M$ from $p\circ \alpha$ to $p\circ \alpha'$, which has length $\ell_0(p\circ \alpha') = d_{\HH_3}(x_0, x_1) < \sys(M)$. Thus $p\circ \alpha$ is an accidental parabolic in $S$, a contradiction.
	
	Now we have $\left.p\right|_{\tilde{S}_0}\colon \tilde{S}_0 \to S$ is an isometric embedding, so in particular, $\area(S) \geq \area(\tilde{S}_0)$. The proposition follows immediately by applying Lemma~\ref{monotonicity}.
\end{proof}

The previous proposition in essence says that surfaces (without accidental parabolics) in $M$ must be sufficiently complex if the geodesics in $M$ are not too short. We now prove this unconditionally for genus 2 surfaces, with or without accidental parabolics.

\begin{thm}\label{thm:systole-genus-bound}
	Let $M$ be a finite volume hyperbolic 3-manifold and $S$ a closed, $\pi_1$-injective surface of genus $g>1$ immersed in $M$. If $\sys(M) > 2\arccosh(9/2) = 4.369...$, then $g \geq 3$.
\end{thm}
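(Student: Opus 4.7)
\emph{Plan.} The proof proceeds by contradiction: assume $g = 2$. By Theorem~\ref{thm:least-area-exist} I may replace $S$ with an immersed least-area minimal surface in its homotopy class; this representative is still closed, $\pi_1$-injective, of genus $2$, and in particular stable. Lemma~\ref{lem:genus-area-bound} then gives the upper bound $\area(S) \leq 4\pi(g - 1) = 4\pi$, and the strategy is to produce a matching lower bound $\area(S) > 7\pi$.

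When $S$ has no accidental parabolics, Proposition~\ref{prop:area-systole-bound} gives this lower bound directly: since $\sys(M)/2 > \arccosh(9/2)$,
\[
\area(S) \;\geq\; 2\pi\big(\cosh(\sys(M)/2) - 1\big) \;>\; 2\pi\!\left(\tfrac{9}{2} - 1\right) \;=\; 7\pi.
\]
The substance of the theorem lies in the case where $S$ does carry an accidental parabolic, where the proof of Proposition~\ref{prop:area-systole-bound} breaks down. Here, my plan is to localize the monotonicity argument: produce a single point $y \in S$ with $\operatorname{injrad}_M(y) \geq \sys(M)/2$, and apply Lemma~\ref{monotonicity} directly at $y$. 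At any such $y$, every nontrivial loop based at $y$ has length at least $\sys(M)$ --- hyperbolic loops by definition of the systole, parabolic loops because $\operatorname{injrad}_M(y) \geq \sys(M)/2$ forbids any loop of length less than $\sys(M)$ --- so $B(y, \sys(M)/2)$ embeds in $M$ and Lemma~\ref{monotonicity} yields $\area(S) \geq 2\pi(\cosh(\sys(M)/2) - 1) > 7\pi$, contradicting the upper bound.

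To produce such a $y$, one argues by contradiction that $S$ cannot lie entirely in the parabolic thin part $U = \{x \in M : \operatorname{injrad}_M(x) < \sys(M)/2\}$. A lift $\widetilde{S} \subset \HH^3$ would then be contained in the union of horoballs about the parabolic fixed points of $\pi_1(M)$; combining $\pi_1(S)$-equivariance of $\widetilde{S}$ with its connectedness, the goal is to conclude that $\pi_1(S)$ must stabilize a single horoball, so $\pi_1(S) \hookrightarrow \ZZ^2$, contradicting the non-abelianness of the genus $2$ surface group. The main obstacle is exactly this last step: once $\sys(M)/2$ exceeds the Margulis constant $\epsilon_3$, the horoballs around distinct cusps of $\pi_1(M)$ can overlap in $\HH^3$, so the connectedness of $\widetilde{S}$ does not by itself pin $\widetilde{S}$ inside a single horoball. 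Upgrading the sketch above into a rigorous confinement of $\pi_1(S)$ inside one parabolic subgroup --- or else finding an alternative area lower bound valid in the accidental-parabolic case --- is the crux of the proof, and presumably accounts for the slightly loose constant $\arccosh(9/2)$ in place of the $\arccosh(3)$ that would be sharp for Case~1 alone.
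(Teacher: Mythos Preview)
Your Case~1 (no accidental parabolics) matches the paper exactly. The gap is Case~2, and you flag it yourself: you do not actually produce the point $y\in S$ with $\operatorname{injrad}_M(y)\geq\sys(M)/2$, and your proposed argument for doing so---confining $\pi_1(S)$ to a single parabolic subgroup via connectedness of $\widetilde S$---fails precisely for the reason you state, that once $\sys(M)/2$ exceeds the Margulis constant the horoballs at that depth can merge. This is not a technicality to be cleaned up; it is the whole content of Case~2, and as written your proof is incomplete.

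The paper handles Case~2 by an entirely different, topological route that does not use minimal surfaces at all. Assume $g=2$ and that $S$ carries an accidental parabolic. Using LERF for $\pi_1(M)$, pass to a finite cover $\widetilde M$ in which $S$ embeds, and truncate $\widetilde M$ to a compact manifold with toroidal boundary disjoint from $S$. A simple closed curve on $S$ representing the accidental parabolic cobounds, by the annulus theorem, an embedded annulus with a curve on $\partial\widetilde M$; compressing $S$ along this annulus yields either two once-punctured tori or a single twice-punctured torus, and a short argument shows one of these is incompressible and boundary incompressible. One then invokes the systole bound of Adams--Reid \cite[Theorem 5.2]{AR00}: a cusped hyperbolic $3$-manifold containing an essential once- or twice-punctured torus has systole at most $\max\{2\arccosh 3,\ \min\{2\arccosh(9/2),\, 4\arccosh 3\}\}=2\arccosh(9/2)$. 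Since systole is non-decreasing in finite covers, this forces $\sys(M)\leq 2\arccosh(9/2)$, contradicting the hypothesis. So the constant $2\arccosh(9/2)$ is not a slack version of the $2\arccosh 3$ from Case~1; it is exactly the Adams--Reid bound for the twice-punctured torus.
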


\begin{proof}
	By Theorem~\ref{thm:least-area-exist} we may after a homotopy assume $S$ is least-area. Note that least-area minimal surfaces are stable. If $S$ has no accidental parabolics, then Proposition~\ref{prop:area-systole-bound} with $\sys(M) > 2\arccosh(9/2)$ gives $\area(S) \geq 7\pi$. Immediately by Lemma~\ref{lem:genus-area-bound} we have $g\geq 3$.
	
	Now suppose $g=2$ and $S$ has an accidental parabolic. We will show that this implies $\sys(M) \leq 2\arccosh(9/2)$. Since $\pi_1 (M)$ is LERF (see e.g. \cite[Chapter 5.2, H.11]{AFW15} for details), we pass to a finite cover $\tilde M$ in which $S$ embeds. Additionally, we truncate $\tilde M$ by removing cusp neighborhoods disjoint from $S$, and so we may take $\tilde M$ to be compact with toroidal boundary. Now take a simple closed curve in $S$ representing the accidental parabolic. By the annulus theorem, this curve cobounds an annulus with another simple closed curve in $\partial \tilde M$. Now we compress $S$ to $\partial \tilde M$ along this annulus. The result is either two once-punctured tori or a single twice-punctured torus, depending on whether the compressing annulus separates $S$. 
	
	In the first case, the once-punctured tori are boundary incompressible, otherwise cutting along a boundary compressing disk yields an essential annulus, which is impossible since $\tilde M$ is hyperbolic. Now consider the second case, so $\tilde M$ contains a properly embedded twice-punctured torus. If this surface is boundary compressible, then again cutting along a compressing disk yields either a once-punctured torus and essential annulus, which is impossible, or a single once-punctured torus, which by the above must be boundary incompressible. Hence $\tilde M$ contains either a once or twice-punctured torus which is incompressible and boundary incompressible. With this, we apply Theorem 5.2 of \cite{AR00} to obtain
	\begin{equation*}
		\sys(\tilde M) \leq \max\{2 \arccosh 3, \min\{2\arccosh(9/2), 4\arccosh(3) \} \} = 2\arccosh(9/2).
	\end{equation*}
	But since $\tilde M$ finitely covers $M$, we have $\sys(M) \leq \sys(\tilde M) \leq 2\arccosh(9/2)$, as desired.
\end{proof}
\section{A universal congruence link}

\subsection{Principal congruence manifolds}

Denote by $\mathcal O_d$ the ring of integers of the number field $\mathbb Q(\sqrt{-d})$ for $d>0$ a square-free integer. This gives rise to a family of finite-covolume Kleinian groups $\Gamma_d = \PSL_2(\mathcal O_d)$ called the \emph{Bianchi groups}. Note that $\mathcal O_1 = \ZZ[i]$, so for $d=1$ we will simply write $\Gamma_1 = \PSL_2(\ZZ[i])$. The Bianchi groups arise as the natural 3-dimensional analogue of the modular group $\PSL_2(\ZZ)$: the non-cocompact arithmetic Kleinian groups are precisely the Kleinian groups commensurable (in the wide sense) with some $\Gamma_d$ (see \cite{MR03} for background). To an ideal $I < \mathcal O_d$, there is an associated \emph{principal congruence group} $$\Gamma(I) = \ker \left\{ \PSL_2(\mathcal O_d) \to \PSL_2(\mathcal O_d/I) \right\}.$$ The quotient $\HH^3/\Gamma(I)$ is called a \emph{principal congruence manifold}. When $I = \langle \alpha \rangle$ is principal, we write $\Gamma(\alpha) = \Gamma(\langle \alpha\rangle)$. The next proposition utilizes the algebra of principal congruence subgroups to get good control over the geometry of their manifold quotients.

\begin{prop}\label{prop:cong-systole-bound}
	Let $\langle \alpha \rangle < \mathcal O_d$ be a principal ideal. If $\lvert \alpha \rvert > \sqrt{11}$, then $$\sys(\HH^3/\Gamma(\alpha) > 2\arccosh(9/2).$$
\end{prop}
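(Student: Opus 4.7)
The plan is to track how the congruence condition $\gamma \equiv I \pmod \alpha$ forces $\tr(\gamma)$ to be far from $2$, and then convert this trace estimate into a translation length bound via standard hyperbolic identities. Let $\gamma \in \Gamma(\alpha)$ be a loxodromic element realizing the systole of $\HH^3/\Gamma(\alpha)$, with complex translation length $\ell + i\theta$ where $\ell = \sys(\HH^3/\Gamma(\alpha))$.

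The first (and main) step is the estimate $\abs{\tr \gamma - 2} \geq \abs{\alpha}^2$. Lift $\gamma$ to $\tilde \gamma \in \SL_2(\mathcal O_d)$ with $\tilde \gamma \equiv I \pmod \alpha$, possibly after replacing by $-\tilde \gamma$. Writing $\tilde \gamma = I + \alpha M$ with $M \in M_2(\mathcal O_d)$, the determinant expansion for a $2 \times 2$ matrix gives
\[
1 = \det \tilde \gamma = \det(I + \alpha M) = 1 + \alpha \tr M + \alpha^2 \det M,
\]
forcing $\tr M = -\alpha \det M$, and therefore
\[
\tr \tilde \gamma - 2 = \alpha \tr M = -\alpha^2 \det M.
\]
Since $\gamma$ is loxodromic, $\tilde \gamma$ is neither $\pm I$ nor parabolic, so $\det M$ is a nonzero element of $\mathcal O_d$ and hence $\abs{\det M} \geq 1$. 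Under the hypothesis $\abs{\alpha} > \sqrt{11}$ this yields
\[
\abs{\tr \tilde \gamma - 2} \geq \abs{\alpha}^2 > 11.
\]

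For the second step I convert this into a bound on $\ell$. Using $\tr \tilde \gamma = 2 \cosh((\ell + i\theta)/2)$ with the identity $\cosh(z) - 1 = 2 \sinh^2(z/2)$, together with $\abs{\sinh(a + ib)}^2 = \sinh^2 a + \sin^2 b$, I compute
\[
\abs{\tr \tilde \gamma - 2} = 4 \abs{\sinh((\ell + i\theta)/4)}^2 = 4\bigl(\sinh^2(\ell/4) + \sin^2(\theta/4)\bigr).
\]
Combining with $\abs{\tr \tilde \gamma - 2} > 11$ and $\sin^2(\theta/4) \leq 1$ gives $\sinh^2(\ell/4) > 7/4$. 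Finally, the double-angle identity $\cosh(\ell/2) = 1 + 2 \sinh^2(\ell/4)$ produces $\cosh(\ell/2) > 9/2$, which is precisely $\ell > 2 \arccosh(9/2)$, as desired.

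The only real subtlety is the determinant computation: it upgrades the naive congruence $\tr \tilde \gamma \equiv 2 \pmod \alpha$ to the considerably sharper $\tr \tilde \gamma \equiv 2 \pmod {\alpha^2}$. This $\alpha^2$ (rather than $\alpha$) is exactly what makes the hypothesis $\abs{\alpha} > \sqrt{11}$ line up with the target bound $2 \arccosh(9/2) \approx 4.37$; indeed the bare congruence $\tr \equiv 2 \pmod \alpha$ together with $\abs{\alpha} > \sqrt{11} < 4$ would produce no nontrivial lower bound on $\ell$ at all. The remaining work is routine manipulation of hyperbolic identities.
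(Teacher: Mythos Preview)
Your proof is correct and follows essentially the same approach as the paper: both establish $\tr\tilde\gamma \equiv 2 \pmod{\alpha^2}$ from the determinant constraint and then convert this into a translation-length bound. The only cosmetic difference is in the conversion step---the paper applies the reverse triangle inequality to get $\lvert\tr\gamma\rvert > 9$ and then invokes $\ell_0 \geq 2\arccosh(\lvert\tr\gamma\rvert/2)$ directly, whereas you work with $\lvert\tr\tilde\gamma - 2\rvert$ and a half-angle identity; both routes arrive at exactly the same threshold $\cosh(\ell/2) > 9/2$.
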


\begin{proof}
	Let $\gamma\in \Gamma(I)$ be loxodromic, and write $$ \gamma = \pm\begin{pmatrix}1+a & b \\ c & 1 + d \end{pmatrix}, \quad a,b,c,d \in I.$$
	Since $\det \gamma = 1$, we have
	$$
		\pm\tr \gamma = 2 + (bc - ad) \equiv 2 \pmod {I^2}.
	$$
	Hence, for a principal ideal $I = \langle \alpha \rangle$, we may write $\pm \tr \gamma = z\alpha^2 + 2$ for some $z\in \mathcal O_d$. Then the reverse triangle inequality yields
	\begin{equation}
		\lvert \tr\gamma\rvert \geq \big\lvert \lvert z\rvert\lvert \alpha\rvert^2 - 2\big\rvert > \lvert11-2\rvert = 9.
	\end{equation}
	Now recall that the complex translation length $\ell(\gamma) = \ell_0(\gamma) + i\theta(\gamma)$ satisfies $$\tr\gamma = 2\cosh\left(\ell(\gamma)/2\right).$$ Hence we obtain $$\ell_0(\gamma) \geq 2\arccosh(\lvert \tr\gamma\rvert/2) > 2\arccosh(9/2),$$ as desired.
\end{proof}

For the remainder of the paper, we will narrow our focus to a particular principal congruence group $\Gamma(3 + 2i) < \PSL_2(\mathcal \ZZ[i])$. It was shown in \cite{BGR19b} that in fact $\HH^3/\Gamma(3 + 2i)$ is the complement of a link $L_0$ in $S^3$. We will discuss this link further in \S 4.2. For now, note $\lvert 3 + 2i\rvert = \sqrt{13} > \sqrt{11}$.
\begin{cor}\label{cor:no-subgroup}
	The link group $\pi_1(S^3\setminus L_0)$ and all its finite index subgroups do not contain a surface subgroup of genus 2.
\end{cor}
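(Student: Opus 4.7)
The plan is to observe that this corollary follows essentially immediately by chaining together Proposition~\ref{prop:cong-systole-bound} with Theorem~\ref{thm:systole-genus-bound}, once we note that the systole bound and the hypotheses of Theorem~\ref{thm:systole-genus-bound} both transfer to finite covers.

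First I would apply Proposition~\ref{prop:cong-systole-bound} to $\alpha = 3+2i$. Since $\abs{3+2i} = \sqrt{13} > \sqrt{11}$, this gives
\begin{equation*}
\sys(S^3\setminus L_0) = \sys(\HH^3/\Gamma(3+2i)) > 2\arccosh(9/2).
\end{equation*}

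Next, let $H \leq \pi_1(S^3\setminus L_0)$ be any finite-index subgroup, and let $p\colon \tilde M \to S^3\setminus L_0$ be the corresponding finite cover, so $\pi_1(\tilde M) = H$. Being a finite cover of a finite-volume cusped hyperbolic $3$-manifold, $\tilde M$ is itself such a manifold. Moreover, any closed geodesic $\tilde\gamma \subset \tilde M$ projects under $p$ to an isometric immersion of a circle in $S^3\setminus L_0$, wrapping some positive integer number of times around the primitive geodesic in its free homotopy class; in particular $\sys(\tilde M) \geq \sys(S^3\setminus L_0) > 2\arccosh(9/2)$.

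Finally, suppose for contradiction that $H$ contained a genus $2$ surface subgroup $\Sigma$. Since $\tilde M$ is aspherical, the inclusion $\Sigma \hookrightarrow \pi_1(\tilde M)$ is induced by a $\pi_1$-injective map from a closed genus $2$ surface into $\tilde M$, which after a small homotopy may be taken to be an immersion. Applying Theorem~\ref{thm:systole-genus-bound} to this immersion in $\tilde M$ would force the genus to be at least $3$, a contradiction. There is no substantive obstacle here: all the heavy lifting was done in Sections~\ref{section:min-surfaces} and~4.1, and the only small points requiring comment are the monotonicity of $\sys$ under finite covers and the realization of a surface subgroup of an aspherical target by a $\pi_1$-injective immersion.
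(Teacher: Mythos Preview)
Your argument is correct and follows essentially the same route as the paper: apply Proposition~\ref{prop:cong-systole-bound} with $\alpha = 3+2i$ to get the systole bound, note that systole is non-decreasing under finite covers, and then invoke Theorem~\ref{thm:systole-genus-bound} to rule out a genus~$2$ surface subgroup. The paper's proof differs only cosmetically: it appeals to LERF to pass to a further finite cover in which the surface \emph{embeds}, whereas you realize the surface subgroup by an immersion directly via asphericity. Since Theorem~\ref{thm:systole-genus-bound} is stated for immersed surfaces (and already uses LERF internally in the accidental-parabolic case), your shortcut is legitimate.

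One small technical caveat: the phrase ``after a small homotopy may be taken to be an immersion'' is not justified by general position alone --- a generic smooth map from a surface into a $3$-manifold has isolated rank-one singularities (Whitney umbrellas), not none. The clean fix is to pass to the cover of $\tilde M$ corresponding to $\Sigma$; by tameness this cover is homeomorphic to $S_2 \times \mathbb{R}$, and projecting the embedded core surface back down to $\tilde M$ yields an honest $\pi_1$-injective immersion. Alternatively, invoking LERF as the paper does sidesteps the issue entirely.
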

\begin{proof}
	Suppose a genus 2 surface subgroup exists as stated. Since $\pi_1(S^3 \setminus L_0)$ is LERF \cite[Chapter 5.2, H.11]{AFW15}, there exists a finite-sheeted cover of $S^3\setminus L_0$ that contains an closed, embedded, $\pi_1$-injective surface of genus $2$. But now Theorem \ref{thm:systole-genus-bound}, Proposition \ref{prop:cong-systole-bound}, and the discussion preceding the corollary yields a contradiction. Note that systole is non-decreasing in finite covers.
\end{proof}

\subsection{Universal Links}
It is a classical theorem of Alexander~\cite{Ale20} that every closed orientable smooth $n$-manifold is a branched cover of $S^n$. The branch locus in Alexander's theorem is in general only a codimension 2 CW-complex, not a submanifold. However, in the 3-dimensional setting, Hilden~\cite{Hil74}, Hirsch~\cite{Hir74}, and Montesinos~\cite{Mon74} independently proved that there always exists a branched cover $M^3 \to S^3$ whose branch locus is a \emph{knot}. We remark that this is very similar to the present consideration of filling subsets: it is easy to construct a filling complex (e.g. the core of a handlebody in a Heegaard splitting), but it is much more difficult to upgrade such a complex to be a submanifold.

Thurston extended this idea in an unpublished manuscript \cite{Thu82} by constructing a link $L$ in $S^3$ with the remarkable property that \emph{every} closed, orientable 3-manifold is a branched covering of $S^3$ with branch set precisely $L$.

\begin{defn}
	A link $L$ in $S^3$ is called \emph{universal} if every closed, orientable 3-manifold is a branched cover of $S^3$ with branch set $L$.
\end{defn}

The goal of this section is to show that the principal congruence link $L_0$ is universal. Known universal links include the Whitehead link \cite{HLM83}, Borromean rings \cite{HLM83}, and non-torus 2-bridge links \cite{HLM85}. Note that this implies the figure-eight knot is universal, a result which we will use later. In fact, it is an open question whether all hyperbolic links in the 3-sphere are universal,\footnote{In fact, the only known links known \emph{not} to be universal are iterated cables of torus links.} but it is difficult in practice to determine for any given link. The most straightforward strategy is to bootstrap off of other known universal links by utilizing the following basic observation.

\begin{lem}\label{lem:sublink-universal}
	If a link $L\subset S^3$ contains a sublink $L'$ that is universal, then $L$ is universal.
\end{lem}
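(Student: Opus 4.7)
The plan is to unwind the definition and observe that a branched covering whose branching is concentrated on a smaller sublink is automatically a branched covering with branching ``contained in'' the larger link. Concretely, let $M$ be an arbitrary closed, orientable 3-manifold. Since $L'\subset S^3$ is universal, there exists a finite branched cover $p\colon M \to S^3$ whose branch locus lies on $L'$. Equivalently, $p$ restricts to an honest (unbranched) covering $p\colon M\setminus p^{-1}(L')\to S^3\setminus L'$.

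Because $L'\subset L$, the inclusion $S^3\setminus L \hookrightarrow S^3\setminus L'$ allows us to further restrict: the map $p\colon M\setminus p^{-1}(L)\to S^3\setminus L$ is still an unbranched covering of spaces. Repairing the missing fibers by adding back $p^{-1}(L)$ upstairs and $L$ downstairs produces a finite branched cover of $S^3$ whose branch locus is contained in $L$; the underlying 3-manifold is unchanged, so this repaired cover is still $M$. In other words, the very same map $p$ now serves as a branched cover $M\to S^3$ with branch set $L$, so $L$ is universal.

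The ``main obstacle'' here is really only a definitional one to flag rather than a substantive step: the phrase ``branch set $L$'' is read in the standard universal-links convention to mean that the branching is supported in $L$ (not that every component of $L$ must carry nontrivial monodromy). Over the new components $L\setminus L'$ the cover $p$ is honestly unbranched, which is permitted under this convention and is what makes the argument essentially a tautology. If one instead insisted on genuine branching over every component of $L$, one would have to modify $p$ near each component of $L\setminus L'$ to introduce nontrivial meridional monodromy, but no such modification is needed for the statement as used here.
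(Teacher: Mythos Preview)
Your proof is correct and follows essentially the same approach as the paper: both observe that a branched cover over $L'$ is automatically a branched cover over $L$ with trivial branching over the extra components $L\setminus L'$. The paper phrases this via the monodromy representation---extending $m\colon \pi_1(S^3\setminus L')\to S_n$ to $\pi_1(S^3\setminus L)$ by sending the new meridians to the identity---whereas you phrase it topologically by restricting and then re-extending the covering map, but these are two descriptions of the same construction.
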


\begin{proof}
	Let $p\colon M \to S^3$ be a covering branched over $L' \subset S^3$. Recall that $p$ is determined by its monodromy $m\colon \pi_1(S^3 \setminus L') \to S_n$ (see, e.g. \cite[Chapter 10]{Rol76}). We can trivially extend the monodromy to $m\colon \pi_1(S^3 \setminus L) \to S_n$ by mapping each meridian of $L \setminus L'$ to the identity permutation and hence obtain a cover $M\to S^3$ branched over $L$.
\end{proof}

We now seek to find a universal sublink of $L_0$, but our search is complicated by there being no known diagram for $L_0$. We give two computational methods to circumvent this problem: a geometric approach using SnapPy~\cite{SnapPy} and an algebraic one using Magma~\cite{Magma}.

\begin{lem}\label{lem:fig-eight-component}
	The principal congruence link $L_0$ has the figure-eight knot as a component.
\end{lem}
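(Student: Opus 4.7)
The strategy is computational. Although $L_0$ has no known planar diagram, the complement $S^3 \setminus L_0$ is by construction the concretely presented hyperbolic manifold $N := \HH^3/\Gamma(3+2i)$, and the identification in \cite{BGR19b} supplies peripheral framings $(\mu_i,\lambda_i)$ realizing the embedding $L_0 \hookrightarrow S^3$, so that each $\mu_i$ is the meridian of the $i$-th component. Identifying the $j$-th component $K_j$ of $L_0$ with the figure-eight knot therefore amounts to Dehn filling every cusp $i\neq j$ of $N$ along $\mu_i$ and checking that the resulting knot complement is the figure-eight knot complement.

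The plan is to carry out this check twice, in parallel, matching the two methods advertised by the paper. First, in SnapPy, I would load a triangulation of $N$ with the framings from \cite{BGR19b}, perform the $(1,0)$ filling on every cusp $i\neq j$ for each candidate index $j$, and use SnapPy's \texttt{is\_isometric\_to} to compare the filled manifold with the figure-eight knot complement; as a coarse sanity check one can first verify that the hyperbolic volume drops to $2.02988\ldots$ and that the invariant trace field becomes $\mathbb Q(\sqrt{-3})$. Second, in Magma, I would take explicit matrix generators for $\Gamma(3+2i)<\PSL_2(\ZZ[i])$, locate its parabolic fixed points, extract the peripheral subgroups and the meridians $\mu_i$ under the same framing, form the quotient $\Gamma(3+2i)/\langle\!\langle \mu_i : i\neq j\rangle\!\rangle$, and verify that this quotient admits the standard two-generator presentation of the figure-eight knot group (e.g.\ via Magma's \texttt{SearchForIsomorphism}). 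Running both checks independently guards against errors in cusp ordering or framing conventions between the two packages.

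The main obstacle is the framing bookkeeping. SnapPy's default peripheral framing for $N$ need not coincide with the meridian-longitude framing induced by the embedding $L_0\hookrightarrow S^3$ from \cite{BGR19b}, and the same caveat applies in Magma; to use the computation I must translate the slopes from \cite{BGR19b} into whichever framing each package records. Once the correct meridians are installed, the remaining verification --- Dehn filling, isometry comparison, and presentation matching --- is an automatable finite computation, and agreement of the two independent implementations gives confidence in the identification of $K_j$ as the figure-eight knot.
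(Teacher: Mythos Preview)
Your proposal is correct and follows essentially the same two-pronged computational approach as the paper: trivially Dehn fill all but one cusp of $\HH^3/\Gamma(3+2i)$ in SnapPy and compare with \texttt{Manifold("4\_1")}, and in parallel quotient $\Gamma(3+2i)$ by all but one meridian in Magma and match against the figure-eight knot group via \texttt{SearchForIsomorphism}. The paper differs only in that it names a specific cusp (cusp 20 in the SnapPy triangulation, meridian 14 in the Magma list) rather than searching over all $j$, and it sidesteps your framing worry by loading the pre-built triangulation and meridian table from \cite{BGR19a}, where the $S^3$-framing is already installed, so that the $(1,0)$ fillings are the correct trivial fillings without any slope translation.
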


\begin{proof}[Proof using SnapPy]
	We use the triangulation \verb|pSL_3_plus_2_sqrt_minus_1.tri| given in \cite{BGR19a} of $S^3 \setminus L_0$. We claim that the component of $L_0$ corresponding to cusp 20 of this triangulation is the figure-eight knot. To see this, we trivially Dehn fill every other component sequentially, retriangulating after every step.
	\begin{quote}
		\begin{verbatim}
In[1]: M = Manifold("pSL_3_plus_2_sqrt_minus_1.tri")

In[2]: for _ in range(20):
         M.dehn_fill((1,0),0)
         M = M.filled_triangulation()
         # fill the first 20 cusps

In[3]: for _ in range(M.num_cusps() - 1):
         M.dehn_fill((1,0),-1)
         M = M.filled_triangulation()
         # fill the last 21 cusps

In[4]: N = Manifold("4_1")
       # figure-eight knot

In[5]: M.is_isometric_to(N)
Out[5]: True
		\end{verbatim}
	\end{quote}
	Note that filling each component one at a time and calling \verb|M.filled_triangulation| afterwards is essential to avoid floating-point errors.
\end{proof}

\begin{proof}[Proof using Magma]
	Here we take a group-theoretic approach to Dehn filling and make extensive use of the methods and subroutines given in \cite{BGR19a}. We use the following presentation of $\PSL_2(\ZZ[i])$ given in \cite{Swa71}: $$ \PSL_2(\ZZ[i]) = \langle a,\ell,t,u \mid \ell^2=(t\ell)^2=(u\ell)^2=(a\ell)^2=a^2=(ta)^3=(ua\ell)^3=1, [t,u] = 1 \rangle.$$ The generators correspond to matrices
	\begin{equation*}
		a = \begin{pmatrix} 0 & -1 \\ 1 & 0\end{pmatrix},\quad \ell = \begin{pmatrix} -i & 0 \\ 0 & i\end{pmatrix},\quad
		t = \begin{pmatrix} 1 & 1 \\ 0 & 1\end{pmatrix},\quad u = \begin{pmatrix} 1 & i \\ 0 & 1\end{pmatrix}.
	\end{equation*}
	Since $\Gamma(3 + 2i)$ is isomorphic to a link group $\pi_1(S^3 \setminus L_0)$, we have that $\Gamma(3 + 2i)$ is normally generated by the meridians of $L_0$. Using the method of \cite[Section 13]{BGR19a}, we compute a complete set $P$ of meridians given in Table~\ref{table:meridians}. Each cusp of $\Gamma(3 + 2i)$ corresponds to precisely one meridian in $P$.
	
	To simplify the computation, we note that since $\PSL_2(\ZZ[i])$ has a single cusp, the peripheral subgroups of $\Gamma(3 + 2i)$ are all mutually conjugate in $\PSL_2(\ZZ[i])$; in fact, a computation using the methods of \cite{BGR19a} shows that they are all conjugate to the subgroup $\langle t^{13}, t^{-5}u\rangle$. Hence $\Gamma(3 + 2i)$ is the normal closure of $\langle t^{13}, t^{-5}u\rangle$ in $\PSL_2(\ZZ[i])$. 
	
	We finish by using Magma to kill all but one meridian, resulting in the figure-eight knot group.
	\begin{quote}
		\begin{verbatim}
Bianchi<a,l,t,u> := Group<a,l,t,u|a^2,l^2,(t*l)^2,(u*l)^2,
                         (a*l)^2,(t*a)^3,(u*a*l)^3,(t,u)>;

G := NormalClosure(Bianchi, sub<Bianchi | t^13, t^-5*u>);
G := Rewrite(Bianchi, G : Simplify := false);
// G = Gamma(<3 + 2i>)

Q := quo< G | P[1..13], P[15..42] >;
Q := ReduceGenerators(Q);
// Filling all but cusp 14
// P is the set of meridians in Table 1

H<a,b> := Group<a,b | a^-1*b*a*b^-1*a*b=b*a^-1*b*a>;
// Figure-eight knot group

SearchForIsomorphism(Q,H,7);
// true
		\end{verbatim}
	\end{quote}
	\vspace{-1em}
\end{proof}

Since the figure-eight knot is universal \cite{HLM85}, Lemma~\ref{lem:sublink-universal} and Lemma~\ref{lem:fig-eight-component} immediately imply the main result of this section.

\begin{prop}\label{prop:cong-universal}
	The principal congruence link $L_0$ is universal.
\end{prop}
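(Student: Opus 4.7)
The plan is simply to combine the three ingredients already assembled in Section 4.2. By Lemma~\ref{lem:fig-eight-component}, one component of $L_0$ is isotopic to the figure-eight knot $4_1$, so the figure-eight knot appears as a sublink $L' \subset L_0$. By the theorem of Hilden--Lozano--Montesinos \cite{HLM85}, the figure-eight knot is universal. Hence $L_0$ contains a universal sublink, and Lemma~\ref{lem:sublink-universal} immediately yields that $L_0$ is itself universal.

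In slightly more detail, given an arbitrary closed, orientable 3-manifold $M$, universality of $4_1$ provides a branched covering $p \colon M \to S^3$ with branch set $4_1 = L'$. One then extends the monodromy representation $m \colon \pi_1(S^3 \setminus L') \to S_n$ trivially across the meridians of the remaining components $L_0 \setminus L'$, exactly as in the proof of Lemma~\ref{lem:sublink-universal}, to obtain a branched covering $M \to S^3$ with branch set $L_0$. This shows every closed orientable 3-manifold is a branched cover of $S^3$ branched over $L_0$, which is the definition of universality.

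There is no genuine obstacle here; the substantive work has already been done in verifying Lemma~\ref{lem:fig-eight-component} (identifying a figure-eight component inside $L_0$ via the SnapPy/Magma computations), and Lemma~\ref{lem:sublink-universal} is a routine observation about extending branched covers. The final proposition is a one-line corollary of these two lemmas.
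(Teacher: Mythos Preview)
Your proposal is correct and matches the paper's approach exactly: the paper likewise deduces the proposition immediately from Lemma~\ref{lem:sublink-universal}, Lemma~\ref{lem:fig-eight-component}, and the universality of the figure-eight knot \cite{HLM85}. There is nothing to add.
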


\subsection{Proof of Theorem~\ref{thm:Main}}
We recall our main theorem from the introduction.
\mainthm*
The theorem follows immediately by combining the following proposition with Proposition~\ref{prop:alternative}.

\begin{prop}\label{prop:genus-2-small-existence}
	Let $M$ be a closed, orientable 3-manifold. Then there exists a hyperbolic link $L\subset M$ with at least 4 components such that $\pi_1(M\setminus L)$ does not contain a genus 2 surface subgroup.
\end{prop}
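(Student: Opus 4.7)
The plan is to pull back the principal congruence link $L_0\subset S^3$ to $M$ via a branched covering and then invoke Corollary~\ref{cor:no-subgroup}. Concretely, since $L_0$ is universal by Proposition~\ref{prop:cong-universal}, I choose a finite-sheeted branched covering $p\colon M \to S^3$ with branch set exactly $L_0$ and set $L := p^{-1}(L_0)\subset M$. The restriction $p|_{M\setminus L}\colon M\setminus L \to S^3\setminus L_0$ is then an honest (unbranched) finite covering, and since $S^3\setminus L_0 \cong \HH^3/\Gamma(3+2i)$ carries a complete hyperbolic metric of finite volume, this metric lifts to $M\setminus L$. Hence $L$ is a hyperbolic link in $M$.

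Next I would verify the component count. Because $p$ is surjective, each component of $L_0$ has non-empty preimage in $M$, so $L$ has at least as many components as $L_0$. The number of components of $L_0$ equals the number of cusps of $\HH^3/\Gamma(3+2i)$, and the meridian data used in Lemma~\ref{lem:fig-eight-component} records $42$ such cusps; in particular $L$ has at least $42\geq 4$ components. Finally, the unbranched covering $M\setminus L \to S^3\setminus L_0$ identifies $\pi_1(M\setminus L)$ with a finite-index subgroup of $\pi_1(S^3\setminus L_0)$. Corollary~\ref{cor:no-subgroup} then immediately rules out the existence of a genus 2 surface subgroup in $\pi_1(M\setminus L)$, completing the construction.

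The main obstacle for this step is essentially absent: all of the substantive work — the systole-to-genus bound of Theorem~\ref{thm:systole-genus-bound}, the congruence systole estimate of Proposition~\ref{prop:cong-systole-bound}, and above all the universality of $L_0$ of Proposition~\ref{prop:cong-universal} — has already been carried out in Sections~\ref{section:min-surfaces} and 4. What remains is only the short assembly above, whose one minor subtlety is simply confirming that pulling back a link with many components under a branched cover preserves the component count lower bound required by Proposition~\ref{prop:alternative}.
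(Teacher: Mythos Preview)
Your proposal is correct and follows essentially the same route as the paper: pull back the universal principal congruence link $L_0$ via a branched cover $p\colon M\to S^3$, note that the restriction to $M\setminus p^{-1}(L_0)$ is an unbranched finite cover of $S^3\setminus L_0$, count components via the $42$ cusps of $L_0$, and invoke Corollary~\ref{cor:no-subgroup}. The only addition you make beyond the paper's argument is the explicit remark that the hyperbolic metric lifts along the unbranched cover, which is fine.
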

\begin{proof}
	By Proposition~\ref{prop:cong-universal}, there exists a covering $p\colon M\to S^3$ branched over $L_0$. Since $L_0$ is a $42$-component link, certainly $L = p^{-1}(L_0)$ has at least 4 components. Now $p$ restricts to an unbranched finite-sheeted covering $p \colon M\setminus L \to S^3 \setminus L_0$, so $\pi_1(M\setminus L) \subset \pi_1(S^3 \setminus L_0)$. Hence $\pi_1(M\setminus L)$ does not contain a genus 2 surface subgroup by Corollary~\ref{cor:no-subgroup}.
\end{proof}

\begin{table}
	\caption{The set $P$ of meridians of $L_0$. Here $h = uau^{-2}au^{-1}ta$.\label{table:meridians}}
	\begin{tabular}{l | l}
		\hline
		$ t^{-5}u $                                 & $ ht^8ut^{-5}uh^{-1} $                   \\
		$ h^2t^8ut^{-5}ut^{-5}uh^{-2} $             & $ at^8ut^{-5}ut^{-5}ua^{-1} $            \\
		$ hat^8ut^{-5}ut^{-5}ua^{-1}h^{-1} $        & $ h^2at^8ut^{-5}ua^{-1}h^{-2} $          \\
		$ tat^8ut^{-5}ut^{-5}ua^{-1}t^{-1} $        & $ t^2at^8ut^{-5}ua^{-1}t^{-2} $          \\
		$ t^3at^8ut^{-5}ua^{-1}t^{-3} $             & $ t^4at^8ut^{-5}ua^{-1}t^{-4} $          \\
		$ t^5at^8ut^{-5}ua^{-1}t^{-5} $             & $ t^6at^8ut^{-5}ua^{-1}t^{-6} $          \\
		$ t^{-1}at^{-5}ua^{-1}t $                   & $ t^{-2}at^8ut^{-5}ut^{-5}ua^{-1}t^2 $   \\
		$ t^{-3}at^8ut^{-5}ut^{-5}ua^{-1}t^3 $      & $ t^{-4}at^8ut^{-5}ut^{-5}ua^{-1}t^4 $   \\
		$ t^{-5}at^8ut^{-5}ua^{-1}t^5 $             & $ t^{-6}at^8ut^{-5}ua^{-1}t^6 $          \\
		$ that^8ut^{-5}ut^{-5}ua^{-1}h^{-1}t^{-1} $ & $ t^2hat^8ut^{-5}ua^{-1}h^{-1}t^{-2} $   \\
		$ t^3hat^8ut^{-5}ua^{-1}h^{-1}t^{-3} $      & $ t^4hat^8ut^{-5}ua^{-1}h^{-1}t^{-4} $   \\
		$ t^5hat^8ut^{-5}ua^{-1}h^{-1}t^{-5} $      & $ t^6hat^8ut^{-5}ua^{-1}h^{-1}t^{-6} $   \\
		$ t^{-1}hat^8ut^{-5}ua^{-1}h^{-1}t $        & $ t^{-2}hat^8ut^{-5}ua^{-1}h^{-1}t^2 $   \\
		$ t^{-3}hat^8ut^{-5}ua^{-1}h^{-1}t^3 $      & $ t^{-4}hat^8ut^{-5}ua^{-1}h^{-1}t^4 $   \\
		$ t^{-5}hat^8ut^{-5}ua^{-1}h^{-1}t^5 $      & $ t^{-6}hat^8ut^{-5}ua^{-1}h^{-1}t^6 $   \\
		$ th^2at^8ut^{-5}ua^{-1}h^{-2}t^{-1} $      & $ t^2h^2at^8ut^{-5}ua^{-1}h^{-2}t^{-2} $ \\
		$ t^3h^2at^8ut^{-5}ua^{-1}h^{-2}t^{-3} $    & $ t^4h^2at^8ut^{-5}ua^{-1}h^{-2}t^{-4} $ \\
		$ t^5h^2at^8ut^{-5}ua^{-1}h^{-2}t^{-5} $    & $ t^6h^2at^8ut^{-5}ua^{-1}h^{-2}t^{-6} $ \\
		$ t^{-1}h^2at^8ut^{-5}ua^{-1}h^{-2}t $      & $ t^{-2}h^2at^8ut^{-5}ua^{-1}h^{-2}t^2 $ \\
		$ t^{-3}h^2at^8ut^{-5}ua^{-1}h^{-2}t^3 $    & $ t^{-4}h^2at^8ut^{-5}ua^{-1}h^{-2}t^4 $ \\
		$ t^{-5}h^2at^8ut^{-5}ua^{-1}h^{-2}t^5 $    & $ t^{-6}h^2at^8ut^{-5}ua^{-1}h^{-2}t^6 $ \\
		\hline
	\end{tabular}
\end{table}

\begin{bibdiv}
	\begin{biblist}

		\bib{AR00}{article}{
			author={Adams, Colin},
			author={Reid, Alan~W.},
			title={Systoles of hyperbolic 3-manifolds},
			date={2000-01},
			journal={Math. Proc. Camb. Philos. Soc.},
			volume={128},
			pages={103\ndash 110},
		}
		
		\bib{Ago04}{arxiv}{
			author={Agol, Ian},
			title={Tameness of hyperbolic 3-manifolds},
			date={2004-05},
			arxiveprint={
					arxivid={math/0405568},
					arxivclass={math.GT}},
		}
		
		\bib{Ale20}{article}{
			author={Alexander, James~W.},
			title={Note on {{Riemann}} spaces},
			date={1920},
			ISSN={0002-9904},
			journal={Bull. Amer. Math. Soc.},
			volume={26},
			number={8},
			pages={370\ndash 372},
		}
		
		\bib{And82}{article}{
			author={Anderson, Michael~T.},
			title={Complete minimal varieties in hyperbolic space},
			date={1982},
			ISSN={0020-9910},
			journal={Invent. Math.},
			volume={69},
			number={3},
			pages={477\ndash 494},
		}
		
		\bib{AFW15}{book}{
			author={Aschenbrenner, Matthias},
			author={Friedl, Stefan},
			author={Wilton, Henry},
			title={3-manifold groups},
			series={{{EMS}} Series of Lectures in Mathematics},
			publisher={{European Mathematical Society (EMS), Z\"urich}},
			date={2015},
			ISBN={978-3-03719-154-5},
		}
		
		\bib{BGR19a}{article}{
			author={Baker, M.~D.},
			author={Goerner, M.},
			author={Reid, A.~W.},
			title={All principal congruence link groups},
			date={2019},
			ISSN={0021-8693},
			journal={J. Algebra},
			volume={528},
			pages={497\ndash 504},
		}
		
		\bib{BGR19b}{arxiv}{
			author={Baker, Mark~D.},
			author={Goerner, Matthias},
			author={Reid, Alan~W.},
			title={Technical report: all principal congruence link groups},
			date={2019-02},
			arxiveprint={
					arxivid={1902.04722},
					arxivclass={math.GT}},
		}
		
		\bib{Bin58}{article}{
			author={Bing, R.~H.},
			title={Necessary and sufficient conditions that a 3-manifold be {$S^3$} },
			date={1958},
			ISSN={0003-486X},
			journal={Ann. of Math. (2)},
			volume={68},
			pages={17\ndash 37},
		}
		
		\bib{Magma}{article}{
		author={Bosma, Wieb},
		author={Cannon, John},
		author={Playoust, Catherine},
		title={The {{Magma}} algebra system. {{I}}. {{The}} user language},
		date={1997},
		ISSN={0747-7171},
		journal={J. Symbolic Comput.},
		volume={24},
		number={3-4},
		pages={235\ndash 265},
		note={Computational algebra and number theory (London, 1993)},
		}
		
		\bib{CG06}{article}{
			author={Calegari, Danny},
			author={Gabai, David},
			title={Shrinkwrapping and the taming of hyperbolic 3-manifolds},
			date={2006},
			ISSN={0894-0347},
			journal={J. Amer. Math. Soc.},
			volume={19},
			number={2},
			pages={385\ndash 446},
		}
		
		\bib{Can96}{article}{
			author={Canary, Richard~D.},
			title={A covering theorem for hyperbolic 3-manifolds and its
					applications},
			date={1996},
			ISSN={0040-9383},
			journal={Topology},
			volume={35},
			number={3},
			pages={751\ndash 778},
		}
		
		\bib{SnapPy}{misc}{
			author={Culler, Marc},
			author={Dunfield, Nathan M.},
			author={Goerner, Matthias},
			author={Weeks, Jeffrey R.},
			title={Snap{P}y, a computer program for studying the geometry and topology of $3$-manifolds},
			note={Available at \url{http://snappy.computop.org}}
		}
		
		\bib{FK21}{arxiv}{
			author={Freedman, Michael},
			author={Krushkal, Vyacheslav},
			title={Filling links and spines in 3-manifolds},
			date={2021-02},
			contribution={
					type={appendix},
					author={Leininger, Christopher J.},
					author={Reid, Alan W.}},
			arxiveprint={
					arxivid={2010.15644},
					arxivclass={math.GT}},
		}
		
		\bib{Has95}{article}{
			author={Hass, Joel},
			title={Acylindrical surfaces in 3-manifolds},
			date={1995},
			ISSN={0026-2285},
			journal={Michigan Math. J.},
			volume={42},
			number={2},
			pages={357\ndash 365},
		}
		
		\bib{HS88}{article}{
			author={Hass, Joel},
			author={Scott, Peter},
			title={The existence of least area surfaces in 3-manifolds},
			date={1988},
			ISSN={0002-9947},
			journal={Trans. Amer. Math. Soc.},
			volume={310},
			number={1},
			pages={87\ndash 114},
		}
		
		\bib{Hat}{misc}{
		author={Hatcher, Allen},
		title={Notes on basic 3-{{Manifold}} topology},
		date={2007},
		note={Available at \url{https://pi.math.cornell.edu/~hatcher/3M/3Mdownloads.html}},
		}
		
		\bib{Hil74}{article}{
			author={Hilden, Hugh~M.},
			title={Every closed orientable 3-manifold is a 3-fold branched covering
					space of {$S^3$}},
			date={1974},
			ISSN={0002-9904},
			journal={Bull. Amer. Math. Soc.},
			volume={80},
			pages={1243\ndash 1244},
		}
		
		\bib{HLM83}{article}{
		author={Hilden, Hugh~M.},
		author={Lozano, Mar{\'i}a~Teresa},
		author={Montesinos, Jos{\'e}~Mar{\'i}a},
		title={The {{Whitehead}} link, the {{Borromean}} rings and the knot
				9{\textsubscript{46}} are universal},
		date={1983},
		ISSN={0010-0757},
		journal={Collect. Math.},
		volume={34},
		number={1},
		pages={19\ndash 28},
		}
		
		\bib{HLM85}{article}{
		author={Hilden, Hugh~M.},
		author={Lozano, Mar{\'i}a~Teresa},
		author={Montesinos, Jos{\'e}~Mar{\'i}a},
		title={On knots that are universal},
		date={1985},
		ISSN={0040-9383},
		journal={Topology},
		volume={24},
		number={4},
		pages={499\ndash 504},
		}
		
		\bib{Hir74}{article}{
		author={Hirsch, Ulrich},
		title={Über offene {{Abbildungen}} auf die 3-{{Sph\"are}}},
		date={1974},
		ISSN={0025-5874},
		journal={Math. Z.},
		volume={140},
		pages={203\ndash 230},
		}
		
		\bib{HW17}{article}{
			author={Huang, Zheng},
			author={Wang, Biao},
			title={Closed minimal surfaces in cusped hyperbolic three-manifolds},
			date={2017},
			ISSN={0046-5755},
			journal={Geom. Dedicata},
			volume={189},
			pages={17\ndash 37},
		}
		
		\bib{JS79}{article}{
			author={Jaco, William~H.},
			author={Shalen, Peter~B.},
			title={Seifert fibered spaces in 3-manifolds},
			date={1979},
			ISSN={0065-9266},
			journal={Mem. Amer. Math. Soc.},
			volume={21},
			number={220},
			pages={viii+192},
		}
		
		\bib{Kap01}{book}{
			author={Kapovich, Michael},
			title={Hyperbolic manifolds and discrete groups},
			series={Progress in {{Mathematics}}},
			publisher={{Birkh\"auser Boston, Inc., Boston, MA}},
			date={2001},
			volume={183},
			ISBN={978-0-8176-3904-4},
		}
		
		\bib{MR03}{book}{
			author={Maclachlan, Colin},
			author={Reid, Alan~W.},
			title={The arithmetic of hyperbolic 3-manifolds},
			series={Graduate Texts in Mathematics},
			publisher={{Springer-Verlag, New York}},
			date={2003},
			volume={219},
			ISBN={0-387-98386-4},
		}
		
		\bib{Mar74}{article}{
			author={Marden, Albert},
			title={The geometry of finitely generated {Kleinian} groups},
			date={1974},
			ISSN={0003-486X},
			journal={Ann. of Math. (2)},
			volume={99},
			pages={383\ndash 462},
		}
		
		\bib{MY82}{article}{
			author={Meeks, William~H., III},
			author={Yau, Shing~Tung},
			title={The classical {{Plateau}} problem and the topology of
					three-dimensional manifolds},
			date={1982},
			ISSN={0040-9383},
			journal={Topology},
			volume={21},
			number={4},
			pages={409\ndash 442},
		}
		
		\bib{Mon74}{article}{
			author={Montesinos, Jos{\'e}~M.},
			title={A representation of closed orientable 3-manifolds as 3-fold
					branched coverings of {$S^3$}},
			date={1974},
			ISSN={0002-9904},
			journal={Bull. Amer. Math. Soc.},
			volume={80},
			pages={845\ndash 846},
		}
		
		\bib{Mye82}{article}{
			author={Myers, Robert},
			title={Simple knots in compact, orientable 3-manifolds},
			date={1982},
			ISSN={0002-9947},
			journal={Trans. Amer. Math. Soc.},
			volume={273},
			number={1},
			pages={75\ndash 91},
		}
		
		\bib{Rol76}{book}{
			author={Rolfsen, Dale},
			title={Knots and links},
			series={Mathematics {{Lecture Series}}, {{No}}. 7},
			publisher={{Publish or Perish, Inc., Berkeley, Calif.}},
			date={1976},
		}
		
		\bib{SU81}{article}{
			author={Sacks, J.},
			author={Uhlenbeck, K.},
			title={The existence of minimal immersions of 2-spheres},
			date={1981},
			ISSN={0003-486X},
			journal={Ann. of Math. (2)},
			volume={113},
			number={1},
			pages={1\ndash 24},
		}
		
		\bib{SU82}{article}{
			author={Sacks, J.},
			author={Uhlenbeck, K.},
			title={Minimal immersions of closed {{Riemann}} surfaces},
			date={1982},
			ISSN={0002-9947},
			journal={Trans. Amer. Math. Soc.},
			volume={271},
			number={2},
			pages={639\ndash 652},
		}
		
		\bib{SY79}{article}{
			author={Schoen, R.},
			author={Yau, Shing~Tung},
			title={Existence of incompressible minimal surfaces and the topology of
					three-dimensional manifolds with nonnegative scalar curvature},
			date={1979},
			ISSN={0003-486X},
			journal={Ann. of Math. (2)},
			volume={110},
			number={1},
			pages={127\ndash 142},
		}
		\bib{Sta65}{article}{
			author={Stallings, John},
			title={Homology and central series of groups},
			journal={J. Algebra},
			volume={2},
			date={1965},
			pages={170\ndash 181},
		}
		
		\bib{Swa71}{article}{
			author={Swan, Richard~G.},
			title={Generators and relations for certain special linear groups},
			date={1971},
			ISSN={0001-8708},
			journal={Advances in Math.},
			volume={6},
			pages={1\ndash 77 (1971)},
		}
		
		\bib{Thu82}{article}{
			author={Thurston, William P.},
			title={Universal links},
			date={1982},
			journal={preprint},
		}
	\end{biblist}
\end{bibdiv}

\end{document}